\newtheorem{theorem}{Theorem}[section]
\newtheorem{lemma}{Lemma}[section]
\newtheorem{corollary}{Corollary}[section]
\theoremstyle{definition}
\newtheorem{remark}{Remark}[section]
\numberwithin{equation}{section}
\newcommand\blfootnote[1]{\begingroup\renewcommand\thefootnote{}\footnote{#1}\addtocounter{footnote}{-1}\endgroup}
\DeclareTextCompositeCommand{\u}{PD1}{\i}{i}
\begin{document}

\title{
{\bf\Large
Existence of positive solutions of a superlinear boundary value problem with indefinite weight }\footnote{Work performed under the auspices of the
Grup\-po Na\-zio\-na\-le per l'Anali\-si Ma\-te\-ma\-ti\-ca, la Pro\-ba\-bi\-li\-t\`{a} e le lo\-ro
Appli\-ca\-zio\-ni (GNAMPA) of the Isti\-tu\-to Na\-zio\-na\-le di Al\-ta Ma\-te\-ma\-ti\-ca (INdAM).}}

\author{
{\bf\large Guglielmo Feltrin}
\vspace{1mm}\\
{\it\small SISSA - International School for Advanced Studies}\\
{\it\small via Bonomea 265}, {\it\small 34136 Trieste, Italy}\\
{\it\small e-mail: guglielmo.feltrin@sissa.it}\vspace{1mm}}

\date{}

\maketitle

\vspace{-2mm}

\begin{abstract}
\noindent
We deal with the existence of positive solutions for a two-point boundary value problem associated with the nonlinear second order equation
\linebreak
$u''+a(x)g(u)=0$. The weight $a(x)$ is allowed to change its sign. 
We assume that the function $g\colon\mathopen{[}0,+\infty\mathclose{[}\to\mathbb{R}$ is continuous, 
$g(0)=0$ and satisfies suitable growth conditions, so as the case $g(s)=s^{p}$, with $p>1$, is covered. 
In particular we suppose that $g(s)/s$ is large near infinity, but we do not require that
$g(s)$ is non-negative in a neighborhood of zero.
Using a topological approach based on the Leray-Schauder degree we obtain a result of
existence of at least a positive solution that improves previous existence theorems.
\blfootnote{\textit{2010 Mathematics Subject Classification:} 34B18, 34B15.}
\blfootnote{\textit{Keywords:} positive solutions, superlinear equation, indefinite weight, boundary value problem, existence result.}
\end{abstract}

\section{Introduction}\label{section1}

In this paper we are interested in the study of positive solutions for the nonlinear two-point boundary value problem
\begin{equation}\label{two-pointBVPag}
\begin{cases}
\, u''+a(x)g(u)=0 \\
\, u(0)=u(L)=0,
\end{cases}
\end{equation}
where $a\colon\mathopen[0,L\mathclose]\to{\mathbb{R}}$ is a Lebesgue integrable function 
and $g\colon{\mathbb{R}}^{+}\to{\mathbb{R}}$ is a continuous function, 
where ${\mathbb{R}}^{+}:=\mathopen{[}0,+\infty\mathclose{[}$ denotes the set of non-negative real numbers.
We recall that a \textit{positive solution} of \eqref{two-pointBVPag} is an absolutely continuous function 
$u\colon\mathopen[0,L\mathclose]\to\mathbb{R}^{+}$ such that its derivative $u'(x)$ is absolutely continuous, 
$u(x)$ satisfies \eqref{two-pointBVPag} for a.e.~$x\in\mathopen[0,L\mathclose]$ 
and $u(x)>0$ for every $x\in\mathopen]0,L\mathclose[$.

This issue has been considered by many authors. As classical examples, we mention 
\cite{AmannLopezGomez1998,BerestyckiCapuzzo-DolcettaNirenberg1994,BonheureGomesHabets2005,ErbeWang1994,LanWebb1998,Nussbaum1975} (see also the references therein),
where different techniques are used to face this type of problem. Our work benefits from a new approach 
based on the Leray-Schauder topological degree,
so, to obtain a positive solution, our goal is to prove that the degree of a suitable operator is non-zero on 
an open domain of $\mathcal{C}(\mathopen{[}0,L\mathclose{]})$ not containing the trivial solution.

Our assumptions allow the weight function $a(x)$ to change its sign a finite number of times and, concerning the nonlinearity,
we suppose that $g(s)$ can change its sign, even an infinite number of times, and that, roughly speaking, it has a superlinear growth
at zero and at infinity. More in detail, with respect to the growth of $g(s)/s$ at zero, we assume a very general condition 
which depends on the sign of $g(s)$ in a right neighborhood of zero.

Our main result states that, under the conditions just presented, problem \eqref{two-pointBVPag} has at least a positive solution.
This theorem clearly covers the case $g(s)=s^{p}$, with $p>1$. Moreover, the results concerning the BVP \eqref{two-pointBVPag} 
where is assumed that $a(x)g(s)\geq0$ for a.e.~$x\in\mathopen[0,L\mathclose]$ and for all $s\geq0$  (see~\cite{ErbeWang1994,LanWebb1998,Nussbaum1975}) 
or that $g(s)>0$ for all $s>0$, 
when $a(x)$ is allowed to change sign (see~\cite{BonheureGomesHabets2005,FeltrinZanolin2015,GaudenziHabetsZanolin2003}), do not contain our result and, in some cases, are easy consequences of it.

Figure~\ref{figure1} and Figure~\ref{figure2} show examples of nonlinearities $g(s)$ satisfying our assumptions and which are not covered
by previous results.

\begin{figure}[h!]
\centering
\includegraphics[width=0.38\textwidth]{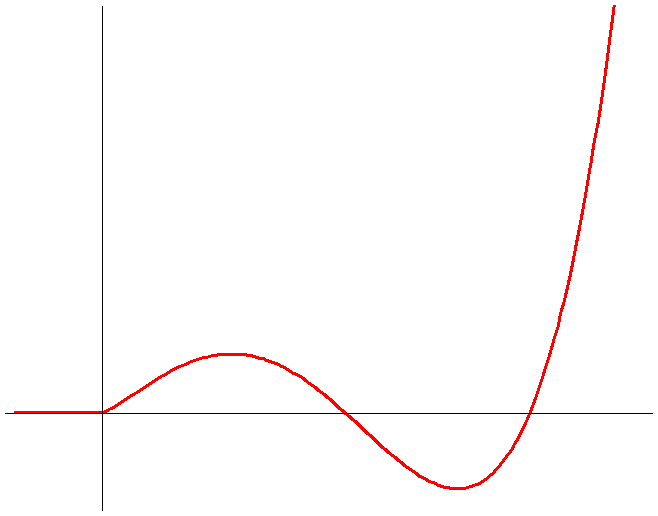}\quad\quad\quad
\includegraphics[width=0.38\textwidth]{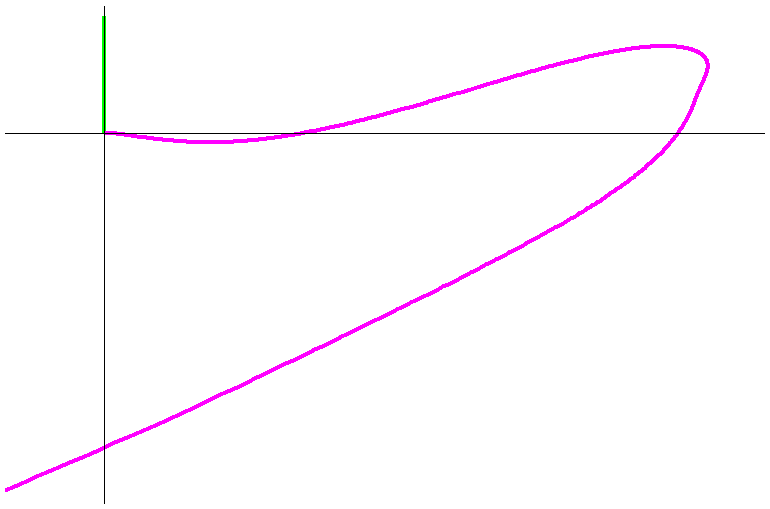}
\caption{\small{A numerical simulation obtained by setting
$I = \mathopen[0,1\mathclose]$, $a(x) = \sin(3\pi x)$
and $g(s) = \min\{20s^{6/5}-6s^3+s^4, 400\, s\arctan(s)\}$. 
On the left we have shown the graph of $g(s)$. We underline that $g(s)$ changes sign and $g(s)/s\not\to +\infty$ as $s\to+\infty$.
On the right we have represented the image of
the segment $\{0\}\times \mathopen[0,12\mathclose]$ through the Poincar\'{e} map in the phase-plane $(u,u')$.
It intersects the negative part of the $u'$-axis in a point, hence there is a positive initial slope at $x=0$
from which departs a solution which is positive on $\mathopen]0,1\mathclose[$ and vanishes at $x=1$.}}
\label{figure1}
\end{figure}

\begin{figure}[h!]
\centering
\includegraphics[width=0.38\textwidth]{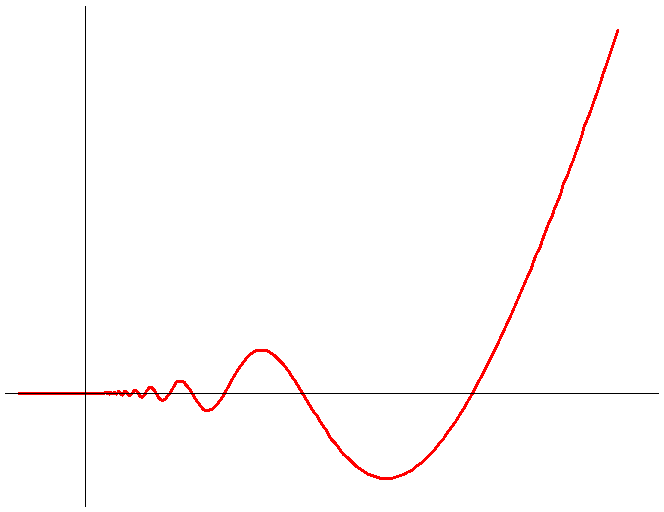}\quad\quad\quad
\includegraphics[width=0.38\textwidth]{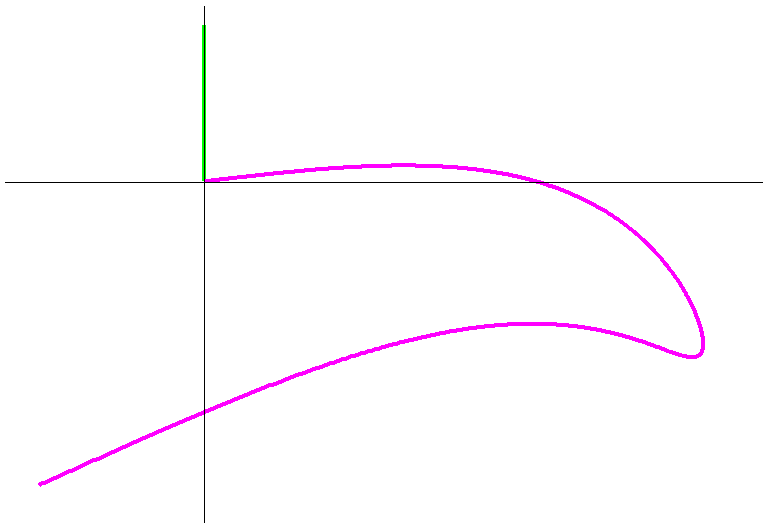}
\caption{\small{A numerical simulation obtained by setting
$I = \mathopen[0,1\mathclose]$, $a(x) = \sin(7\pi x)$
and $g(s) = s^{3}+s^{2}\sin(1/s)$. 
On the left we have shown the graph of $g(s)$. The nonlinearity $g(s)$ changes sign an infinite number of times in every neighborhood of zero.
On the right we have represented the image of the segment $\{0\}\times \mathopen[0,16\mathclose]$ through the Poincar\'{e} map in the phase-plane $(u,u')$.}}
\label{figure2}
\end{figure}

The plan of the paper is as follows. In Section~\ref{section2} we present some basic facts.
More in detail we list the hypotheses and we introduce an equivalent fixed point problem that permits to face the problem
with a topological approach. In fact, using the technical assumptions, we are able to compute the degree on suitable small and large balls,
in the same spirit of \cite{FeltrinZanolin2015}.

In Section~\ref{section3} we present our main result. The theorem we state is an immediate corollary 
of the results exhibited in the previous section. In particular, we prove that the topological degree is non-zero on an annular domain.
Therefore a nontrivial fixed point exists, this corresponds to a positive solution (using a standard maximum principle).
Straightforward corollaries are then obtained.

Section~\ref{section4} shows an important existence result of radially symmetric solutions on annular domains.

\section{Preliminaries}\label{section2}

In this section we state the hypotheses on $a(x)$ and on $g(s)$, we recall some classical results and we prove two preliminary lemmas
that are then employed in Section~\ref{section3} for the main result.

Consider the nontrivial closed interval $\mathopen{[}0,L\mathclose{]}$, pointing out that 
different choices of a nontrivial compact interval contained in $\mathbb{R}$ can be made. 
Let $a\colon\mathopen[0,L\mathclose]\to{\mathbb{R}}$ be a $L^{1}$-weight function. 
Clearly the case of a continuous function can be treated as well. We assume that
\begin{itemize}
\item[$(H1)$]\textit{there exist $m\geq 1$ intervals $I_{1},\ldots,I_{m}$, closed and pairwise disjoint, such that
\begin{equation*}
\begin{aligned}
   & a(x)\geq 0, \quad \text{for a.e. }  x\in \bigcup_{i=1}^{m} I_{i};
\\ & a(x)\leq 0, \quad \text{for a.e. }  x\in \mathopen{[}0,L\mathclose{]}\setminus \bigcup_{i=1}^{m} I_{i}.
\end{aligned}
\end{equation*}}
\end{itemize}
We underline that assumption $(H1)$ trivially includes the case where $a(x)\geq 0$ for a.e.~$x\in\mathopen{[}0,L\mathclose{]}$, 
taking $m=1$ and $I_{1}=\mathopen{[}0,L\mathclose{]}$.
As standard notation, we define
\begin{equation*}
a^{+}(x):=\max\{a(x),0\}, \qquad a^{-}(x):=\max\{-a(x),0\}.
\end{equation*}

Concerning the nonlinearity, we suppose that $g\colon{\mathbb{R}}^{+}\to{\mathbb{R}}$ is a continuous function such that
\begin{equation*}
g(0)=0 \quad \text{ and }\quad g\not\equiv0. \leqno{(H2)}
\end{equation*}
We set
\begin{equation*}
g_{0}^{inf}:=\liminf_{s\to 0^{+}} \dfrac{g(s)}{s} > -\infty,
\qquad
g_{0}^{sup}:=\limsup_{s\to 0^{+}} \dfrac{g(s)}{s} < +\infty
\end{equation*}
and
\begin{equation*}
g_{\infty}:=\liminf_{s\to +\infty} \dfrac{g(s)}{s} > 0.
\end{equation*}
We stress that  we do not suppose $g(s)\geq0$ on ${\mathbb{R}}^{+}$ and, in particular, it is not required that $g(s)>0$ for all $s>0$ 
(cf.~\cite{ErbeWang1994,FeltrinZanolin2015,GaudenziHabetsZanolin2003,LanWebb1998}).
Consequently, the nonlinearity $g(s)$ could be non-negative, non-positive or it could change sign, even an infinite number of times,
on a compact neighborhood of zero.

Now we show how the superlinearity of $g$ is expressed at zero and at infinity. 
As first step we impose a condition on the growth of $g(s)/s$ at $0$,  depending on the sign of $g(s)$.
Precisely we assume that
\begin{itemize}\item[$(H3)$] 
\begin{itemize}
\item[$\bullet$] \textit{if there exists $\delta>0$ such that $g(s)\geq0$, for all $s\in\mathopen{[}0,\delta\mathclose{]}$, it holds that 
\begin{equation*}
a^{+}(x)\not\equiv0 \text{ on } \mathopen{[}0,L\mathclose{]} \quad \text{ and } \quad g_{0}^{sup} < \lambda_{0}^{+},
\end{equation*}
where $\lambda_{0}^{+} > 0$ is the first eigenvalue of the eigenvalue problem
\begin{equation*}
\varphi'' + \lambda \, a^{+}(x) \, \varphi =0, \quad \varphi(0) = \varphi(L) = 0;
\end{equation*}
}
\item[$\bullet$] \textit{if there exists $\delta>0$ such that $g(s)\leq0$, for all $s\in\mathopen{[}0,\delta\mathclose{]}$, it holds that
\begin{equation*}
a^{-}(x)\not\equiv0 \text{ on } \mathopen{[}0,L\mathclose{]} \quad \text{ and } \quad g_{0}^{inf} > -\lambda_{0}^{-},
\end{equation*}
where $\lambda_{0}^{-} > 0$ is the first eigenvalue of the eigenvalue problem
\begin{equation*}
\varphi'' + \lambda \, a^{-}(x) \, \varphi =0, \quad \varphi(0) = \varphi(L) = 0;
\end{equation*}
}
\item[$\bullet$] \textit{if $g(s)$ changes sign an infinite number of times in every neighborhood of zero, it holds that
\begin{equation*}
a(x)\not\equiv0 \text{ on } \mathopen{[}0,L\mathclose{]} \quad \text{ and } \quad -\lambda_{0}<g_{0}^{inf}\leq g_{0}^{sup}<\lambda_{0},
\end{equation*}
where $\lambda_{0} > 0$ is the first eigenvalue of the eigenvalue problem
\begin{equation*}
\varphi'' + \lambda \, |a(x)| \, \varphi =0, \quad \varphi(0) = \varphi(L) = 0.
\end{equation*}
}
\end{itemize}
\end{itemize}
The functions $a(x)$ and $g(s)$ introduced in Figure~\ref{figure1} satisfy the first condition of hypothesis $(H3)$, 
while the example shown in Figure~\ref{figure2} corresponds to the third case.

As second step we define the superlinear behavior at infinity. We suppose that
\begin{itemize}
\item[$(H4)$] \textit{for all $i\in\{1,\ldots,m\}$
\begin{equation*}
a(x)\not\equiv 0 \text{ on } I_{i} \quad \text{ and } \quad g_{\infty} > \lambda_{1}^{i},
\end{equation*}
where $\lambda_{1}^{i} > 0$ is the first eigenvalue of the eigenvalue problem
\begin{equation*}
\varphi'' + \lambda \, a^{+}(x) \, \varphi =0, \quad \varphi|_{\partial I_{i}} = 0.
\end{equation*}
}
\end{itemize}

\medskip

Now we describe the topological approach we adopt to face problem \eqref{two-pointBVPag}. 
Our first goal is to introduce a completely continuous operator and to define an equivalent fixed point problem.

Let $\tilde{g}\colon{\mathbb{R}}\to{\mathbb{R}}$ be the standard extension of $g(s)$ defined as
\begin{equation*}
\tilde{g}(s)=
\begin{cases}
\, g(s), & \text{if } s\geq0; \\
\, 0,  & \text{if } s\leq0.
\end{cases}
\end{equation*}
We deal with the boundary value problem
\begin{equation}\label{BVP}
\begin{cases}
\, u''+a(x)\tilde{g}(u)=0 \\
\, u(0)=u(L)=0.
\end{cases}
\end{equation}
From conditions $(H2)$ and $(H3)$ and by a classical maximum principle (cf.~\cite{FeltrinZanolin2015,ManasevichNjokuZanolin1995}), 
it follows that all possible solutions of \eqref{BVP} are non-negative. 
Moreover, if these solutions are nontrivial, then they are strictly positive on 
$\mathopen{]}0,L\mathclose{[}$ and hence positive solutions of \eqref{two-pointBVPag}. 

The next step is to define the classical operator 
$\Phi\colon\mathcal{C}(\mathopen{[}0,L\mathclose{]})\to\mathcal{C}(\mathopen{[}0,L\mathclose{]})$ by
\begin{equation}\label{operator}
(\Phi u)(x):= \int_{0}^{L} G(x,\xi)a(\xi)\tilde{g}(u(\xi)) ~\!d\xi,
\end{equation}
where $G(x,s)$ is the Green function associated to the equation $u''+u=0$ with the two-point boundary condition.
The operator $\Phi$ is completely continuous in $\mathcal{C}(\mathopen{[}0,L\mathclose{]})$,
endowed with the $\sup$-norm $\|\cdot\|_{\infty}$, and such that $u$ is a fixed point of $\Phi$ if and only if $u$ is a solution of \eqref{BVP}.
Therefore we have transformed problem \eqref{two-pointBVPag} into an equivalent fixed point problem.

\medskip

We close this section by proving two technical lemmas that allow us to find a nontrivial fixed point of $\Phi$, 
hence a positive solution of \eqref{two-pointBVPag}.
The approach we use now is based on the Leray-Schauder topological degree and it is in the same spirit of \cite{FeltrinZanolin2015}.

Using this first lemma we are able to compute the degree of $Id-\Phi$ on small balls.

\begin{lemma}\label{lemmar0}
There exists $r_{0}>0$ such that
\begin{equation*}
deg(Id-\Phi,B(0,r),0)=1, \quad \forall \, 0<r\leq r_{0}.
\end{equation*}
\end{lemma}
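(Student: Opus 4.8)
The plan is to show that for sufficiently small balls, the only fixed point of $\Phi$ is the trivial one, and moreover that $\Phi$ admits no fixed points of the form $\vartheta u$ for $\vartheta \in \mathopen[0,1\mathclose]$ on the boundary of a small ball, which by the homotopy invariance of the degree reduces the computation to $\deg(Id, B(0,r), 0) = 1$. The key idea is to exploit the three cases of hypothesis $(H3)$, each of which controls the growth of $g(s)/s$ near zero relative to the first eigenvalue of an appropriate weighted eigenvalue problem. First I would fix $\varepsilon>0$ small enough that the relevant strict inequality in $(H3)$ still holds after perturbing the eigenvalue bound by $\varepsilon$, and then choose $\delta>0$ so that, for $0 \le s \le \delta$, the nonlinearity satisfies the corresponding linear bound, e.g. $|g(s)| \le (\lambda_0 - \varepsilon)|a|$-compatible estimate $g(s) \le (g_0^{sup}+\varepsilon)s$ in the first case.

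Next I would set up the homotopy argument. Suppose, for contradiction, that for every $r_0$ there is a fixed point $u$ of the homotopy, i.e. $u = \vartheta \Phi u$ with $\|u\|_\infty$ arbitrarily small and $\vartheta \in \mathopen]0,1\mathclose]$. Such a $u$ solves $u'' + \vartheta\, a(x)\tilde g(u) = 0$ with the Dirichlet conditions, and since solutions of this type are non-negative by the maximum principle invoked in the preliminaries, we have $0 \le u(x) \le \|u\|_\infty \le \delta$. I would then multiply the equation by the positive first eigenfunction $\varphi_0$ of the appropriate eigenvalue problem (that is, the one associated to $\lambda_0^+$, $\lambda_0^-$, or $\lambda_0$ depending on which case of $(H3)$ applies) and integrate by parts twice over $\mathopen[0,L\mathclose]$. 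The boundary terms vanish because both $u$ and $\varphi_0$ satisfy the Dirichlet conditions. This yields an integral identity relating $\int u'' \varphi_0 = -\lambda_0 \int w(x) u \varphi_0$ (where $w$ is the relevant weight) to $-\vartheta \int a(x) g(u)\varphi_0$, and the growth bound on $g$ near zero produces a strict inequality incompatible with $u \not\equiv 0$, forcing $u \equiv 0$.

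The step I expect to be the main obstacle is handling the three cases of $(H3)$ uniformly, and in particular the sign bookkeeping in the integral identity. When $g$ is sign-changing near zero (the third case), the natural weight is $|a(x)|$, and one must bound $|a(x)g(u)| \le (\lambda_0 - \varepsilon)|a(x)|\,u$ and then compare against $\lambda_0 \int |a(x)| u \varphi_0$; the eigenfunction positivity guarantees the comparison has a definite sign. In the one-sided cases the support of $a^+$ or $a^-$ matters, and I would need the condition $a^\pm \not\equiv 0$ to ensure the eigenvalue $\lambda_0^\pm$ is well defined and positive and that the weighted integral $\int a^\pm u \varphi_0$ does not vanish identically for nontrivial $u$. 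Care is also required because $\vartheta$ ranges over $\mathopen]0,1\mathclose]$: since $\vartheta \le 1$, the homotopy parameter only weakens the destabilizing term, so the strict eigenvalue inequality persists across the whole homotopy, which is exactly what makes the admissibility of the homotopy (no fixed points on $\partial B(0,r)$) hold and lets the degree be computed as that of the identity.
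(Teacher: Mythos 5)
Your proposal is correct and follows essentially the same route as the paper: show that the homotopy $u=\vartheta\Phi(u)$ has no nontrivial non-negative solutions of small sup-norm by testing the equation against the positive first eigenfunction of the eigenvalue problem singled out by the relevant case of $(H3)$ (the paper phrases this as a Sturm comparison via the Wronskian identity, which is the same integration by parts), and then conclude by homotopy invariance that the degree equals $\deg(Id,B(0,r),0)=1$. The paper writes out only the case $g\geq 0$ near zero and declares the other two analogous, exactly as you anticipate in your case analysis.
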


\begin{proof}
We divide the proof in two steps.

\smallskip

\noindent
\textit{Step 1. We prove that there exists $r_{0}>0$ such that every solution $u(x)\geq 0$ of the two-point BVP
\begin{equation}\label{BVP1}
\begin{cases}
\, u''+ \vartheta a(x)g(u)=0, \quad 0\leq\vartheta\leq1, \\
\, u(0)=u(L)=0
\end{cases}
\end{equation}
satisfying $\max_{x\in \mathopen{[}0,L\mathclose{]}}u(x)\leq r_{0}$ is such that $u(x) = 0$, for all $x\in \mathopen{[}0,L\mathclose{]}$.
}

The proof of this first step is given only when there exists $\delta>0$ such that $g(s)\geq0$, for all $s\in\mathopen{[}0,\delta\mathclose{]}$.
The two remaining cases can be treated in an analogous way.

Using condition $(H3)$, we fix $0<r_{0}<\delta$ such that
\begin{equation*}
\dfrac{g(s)}{s}< \lambda_{0}^{+}, \quad \forall \, 0<s\leq r_{0}.
\end{equation*}
Now, suppose by contradiction that there exist $\vartheta\in\mathopen[0,1\mathclose]$
and a positive solution $u(x)\not\equiv 0$ of \eqref{BVP1} such that
$\max_{x\in \mathopen[0,L\mathclose]} u(x) = r$ for some $0 < r \leq r_{0}$.
The choice of $r_{0}$ and the maximum principle imply that
\begin{equation*}
0\leq\vartheta g(u(x)) < \lambda_{0}^{+}u(x), \quad \text{for all } x\in \mathopen]0,L\mathclose[.
\end{equation*}
Let $\varphi$ be a positive eigenfunction of
\begin{equation*}
\begin{cases}
\, \varphi''+ \lambda_{0}^{+}a^{+}(x)\varphi=0 \\
\, \varphi(0)= \varphi(L)=0.
\end{cases}
\end{equation*}
We stress that $\varphi(x)>0$, for all $x\in \mathopen]0,L\mathclose[$.
Using a Sturm comparison argument, we attain
\begin{equation*}
\begin{aligned}
0  & = \bigl{[}u'(x)\varphi(x)-u(x)\varphi'(x)\bigr{]}_{x=0}^{x=L}
\\ & = \int_{0}^{L}\dfrac{d}{dx}\Bigl{[}u'(x)\varphi(x)-u(x)\varphi'(x)\Bigr{]} ~\!dx
\\ & = \int_{0}^{L}\Bigl{[}u''(x)\varphi(x)-u(x)\varphi''(x)\Bigr{]} ~\!dx
\\ & = \int_{0}^{L}\Bigl{[}-\vartheta a(x)g(u(x))\varphi(x)+u(x)\lambda_{0}^{+}a^{+}(x)\varphi(x)\Bigr{]} ~\!dx
\\ & \geq \int_{0}^{L}\Bigl[\lambda_{0}^{+}u(x)-\vartheta g(u(x))\Bigr]a^{+}(x)\varphi(x) ~\!dx
\\ & > 0,
\end{aligned}
\end{equation*}
a contradiction.

\medskip

\noindent
\textit{Step 2. Computation of the degree.}
Let us fix $0\leq \vartheta \leq 1$. 
As remarked when we have introduced the operator $\Phi$, the maximum principle ensures that 
every fixed point in $\mathcal{C}(\mathopen[0,L\mathclose])$ of the operator $\vartheta\Phi$ is non-negative and, moreover,
$u\in\mathcal{C}(\mathopen[0,L\mathclose])$ satisfies $u=\vartheta\Phi(u)$ 
if and only if $u$ is a solution of the equation \eqref{BVP1}.
Therefore, setting $r\in\mathopen]0,r_{0}\mathclose]$, \textit{Step~1} implies that $\|u\|_{\infty} \neq r$ and hence
\begin{equation*}
u\neq \vartheta \Phi(u), \quad \forall \, \vartheta\in\mathopen[0,1\mathclose], \; \forall \, u\in \partial B(0,r).
\end{equation*}
By the homotopic invariance property of the topological degree, we obtain that
\begin{equation*}
deg(Id-\Phi,B(0,r),0)=deg(Id,B(0,r),0)=1.
\end{equation*}
\end{proof}

Now we compute the degree on large balls.

\begin{lemma}\label{lemmaR}
There exists $R^{*}>0$ such that
\begin{equation*}
deg(Id-\Phi,B(0,R),0)=0, \quad \forall \, R\geq R^{*}.
\end{equation*}
\end{lemma}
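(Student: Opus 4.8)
My plan is to push the fixed point problem out to infinity by means of a non-negative forcing term, and then to combine the homotopy invariance of the degree with an a priori bound produced by the superlinearity assumption $(H4)$. Fix once and for all an index $i\in\{1,\dots,m\}$, write $I_{i}=\mathopen[\sigma,\tau\mathclose]$, and let $\varphi$ be the positive principal eigenfunction of the problem in $(H4)$, so that $\varphi''+\lambda_{1}^{i}a^{+}(x)\varphi=0$ on $I_{i}$, $\varphi(\sigma)=\varphi(\tau)=0$ and $\varphi>0$ on $\mathopen]\sigma,\tau\mathclose[$ (whence $\varphi'(\sigma)>0$ and $\varphi'(\tau)<0$). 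I would take a fixed function $v\in\mathcal{C}(\mathopen[0,L\mathclose])$ with $v\geq0$, $v\not\equiv0$ and $\mathrm{supp}\,v\subseteq I_{i}$ (for instance $v=\varphi$ extended by zero), and consider the parametrized family
\begin{equation*}
u=\Phi(u)+\alpha v,\qquad \alpha\geq 0 .
\end{equation*}
Since $v\geq0$, the maximum principle applies exactly as for $\Phi$, so every solution is non-negative and solves the forced version of \eqref{BVP}. The whole lemma follows once I prove the uniform statement (P1): there exist $R^{*}>0$ and $\alpha^{*}>0$ such that every solution $u$ of the displayed equation with $\alpha\geq0$ satisfies $\|u\|_{\infty}<R^{*}$ and $\alpha<\alpha^{*}$.

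Granting (P1) the conclusion is immediate. Fix $R\geq R^{*}$. By (P1) no solution lies on $\partial B(0,R)$ for any $\alpha\in\mathopen[0,\alpha^{*}\mathclose]$, so the homotopy $\alpha\mapsto Id-\Phi-\alpha v$ is admissible on $\partial B(0,R)$ and
\begin{equation*}
deg(Id-\Phi,B(0,R),0)=deg(Id-\Phi-\alpha^{*}v,B(0,R),0).
\end{equation*}
Again by (P1) the equation $u=\Phi(u)+\alpha^{*}v$ has no solution at all (it would force $\alpha^{*}<\alpha^{*}$), so the right-hand side is $0$.

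The core is therefore (P1), and this is where I expect the real work to lie. To control $u$ on $I_{i}$ and to bound $\alpha$ at the same time I would exploit $(H4)$: since $g_{\infty}>\lambda_{1}^{i}$, fix $\mu\in\mathopen]\lambda_{1}^{i},g_{\infty}\mathclose[$ and, using $g(0)=0$, the continuity of $g$ and the definition of $g_{\infty}$, a constant $\eta\geq0$ with $g(s)\geq\mu s-\eta$ for all $s\geq0$. Multiplying the forced equation by $\varphi$, integrating over $I_{i}$ and integrating by parts twice, the boundary contribution reduces to $-u(\tau)\varphi'(\tau)+u(\sigma)\varphi'(\sigma)\geq0$ (because $u\geq0$ and by the signs of $\varphi'$ at $\sigma,\tau$), and the term $\int_{I_{i}}u\varphi''$ becomes $-\lambda_{1}^{i}\int_{I_{i}}a^{+}u\varphi$ by the eigenvalue equation; on $I_{i}$ one has $a=a^{+}$ and $\tilde g(u)=g(u)$, so inserting $g(u)\geq\mu u-\eta$ leaves an inequality dominated by $(\mu-\lambda_{1}^{i})\int_{I_{i}}a^{+}u\varphi$ with strictly positive coefficient. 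This positivity is exactly what prevents both $\int_{I_{i}}a^{+}u\varphi$ and the forcing amplitude $\alpha$ from being large, yielding the threshold $\alpha^{*}$ and a control of $u$ over the support of $a^{+}$ in $I_{i}$.

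Finally I would globalize this control to all of $\mathopen[0,L\mathclose]$. Let $s_{1}>\eta/\mu$, so that $g(s)>0$ for $s\geq s_{1}$. On each maximal subinterval $J\subseteq\mathopen[0,L\mathclose]\setminus\bigcup_{j}I_{j}$, where $a\leq0$ and where $v$ vanishes, at the points where $u>s_{1}$ the equation gives $u''=a^{-}(x)\tilde{g}(u)\geq0$, so $u$ is convex there and cannot exceed the maximum of $s_{1}$ and its values at $\partial J$; chaining these estimates with the control on the $I_{j}$ and with $u(0)=u(L)=0$ gives the uniform bound $\|u\|_{\infty}<R^{*}$. The main obstacle is precisely (P1): converting the weighted integral control on $I_{i}$ into a genuine pointwise bound and making every estimate uniform in the parameter $\alpha$ (which is what dictates the choice of $v$), whereas the convexity step that propagates the bound from the $I_{j}$ to the negativity intervals is comparatively routine.
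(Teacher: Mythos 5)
Your overall architecture coincides with the paper's: a non-negative forcing $\alpha v$, the de Figueiredo--Nussbaum criterion (no solutions of $u=\Phi(u)+\alpha v$ on $\partial B(0,R)$ for any $\alpha\geq0$ implies degree zero), an eigenfunction comparison on the intervals $I_{i}$ driven by $(H4)$, and convexity to propagate the bound across the negativity set. The problem is that your central estimate (P1) is not actually established, and you say so yourself: multiplying by $\varphi$ and using $g(s)\geq\mu s-\eta$ yields only the weighted integral bound $(\mu-\lambda_{1}^{i})\int_{I_{i}}a^{+}u\varphi\leq\eta\int_{I_{i}}a^{+}\varphi$, and this does \emph{not} convert into a bound on $\max_{I_{i}}u$: $\varphi$ vanishes at $\partial I_{i}$ and $a^{+}$ may vanish on large portions of $I_{i}$, so $u$ could in principle develop an arbitrarily high spike localized where $a^{+}\varphi$ is small while the weighted integral stays bounded. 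That conversion is precisely where the paper's real work lies. The paper argues by contradiction on a slightly shrunk interval $I_{i}^{\varepsilon}=\mathopen[\sigma_{i}+\varepsilon,\tau_{i}-\varepsilon\mathclose]$ (chosen so that its first eigenvalue $\hat\lambda$ still satisfies $\hat\lambda<g_{\infty}$): using concavity of $u$ on super-level sets and a quantitative estimate on $|u'|$ at the crossing point, it first shows that $\max_{I_{i}}u_{n}\to\infty$ forces $u_{n}>\tilde R$ \emph{uniformly on all of} $I_{i}^{\varepsilon}$; only then does the Sturm comparison on $I_{i}^{\varepsilon}$ apply \emph{pointwise} (via $g(s)>\hat\lambda s$ for $s\geq\tilde R$ and the strict signs $u_{n}>0$, $\varphi'(\sigma_{i}+\varepsilon)>0>\varphi'(\tau_{i}-\varepsilon)$) to produce the contradiction $0>\dots\geq0$. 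Without this step, or some substitute for it, your proof is incomplete at its decisive point.

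A secondary but real issue is the choice of $v$. Taking $v=\varphi$ extended by zero, the relation $u=\Phi(u)+\alpha v$ does not mean that $u$ solves $u''+a(x)\tilde g(u)+\alpha w=0$ with $w\geq0$: one only gets $(u-\alpha v)''=-a\tilde g(u)$, so the effective forcing is $\alpha v''$, which here carries positive Dirac masses at $\sigma,\tau$ (the extension has corners since $\varphi'(\sigma)>0>\varphi'(\tau)$), so $u$ is no longer a Carath\'eodory solution and the maximum principle/concavity arguments do not apply "exactly as for $\Phi$". The fix is the paper's: take $v$ to be the Green-operator image of a non-negative function, e.g.\ $v(x)=\int_{0}^{L}G(x,s)\mathbbm{1}_{A}(s)\,ds$ with $A=\bigcup_{i}I_{i}$, so that $u=\Phi(u)+\alpha v$ is exactly problem \eqref{BVP2} with $h(x,s)=a(x)g(s)+\alpha\mathbbm{1}_{A}(x)\geq a(x)g(s)$ on each $I_{i}$. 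Your additional bound $\alpha<\alpha^{*}$ is a harmless extra (the cited degree criterion disposes of large $\alpha$ by compactness of $\Phi$), and your convexity globalization matches the paper's Step 2; but as written the lemma's key a priori bound is asserted rather than proved.
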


\begin{proof}
We divide the proof in two steps.

\smallskip

\noindent
\textit{Step 1. A priori bounds for $u$ on each $I_{i}$.}
For each $i\in\{1,\ldots,m\}$, we prove that there exists $R_{i}>0$ such that 
for each $L^{1}$-Carath\'{e}odory function $h\colon \mathopen[0,L\mathclose] \times{\mathbb{R}}^{+}\to {\mathbb{R}}$ with
\begin{equation*}
h(x,s)\geq a(x)g(s), \quad \text{a.e. } x\in I_{i}, \; \forall \, s\geq 0,
\end{equation*}
every solution $u(x)\geq0$ of the two-point BVP
\begin{equation}\label{BVP2}
\begin{cases}
\, u''+h(x,u)=0 \\
\, u(0)=u(L)=0
\end{cases}
\end{equation}
satisfies $\max_{x\in I_{i}}u(x)<R_{i}$.

We fix an index $i\in\{1,\ldots,m\}$ and set $I_{i}:=\mathopen[\sigma_{i},\tau_{i}\mathclose]$. 
Let $0<\varepsilon<(\tau_{i}-\sigma_{i})/2$ be fixed such that 
\begin{equation*}
a^{+}(x)\not\equiv0 \quad \text{on } I_{i}^{\varepsilon},
\end{equation*}
where $I_{i}^{\varepsilon}:=\mathopen[\sigma_{i}+\varepsilon,\tau_{i}-\varepsilon\mathclose]$,
and such that the first positive eigenvalue $\hat{\lambda}$ of the eigenvalue problem
\begin{equation}\label{eigen-pb}
\begin{cases}
\, \varphi''+ \lambda \, a^{+}(x) \, \varphi=0 \\
\, \varphi|_{\partial I_{i}^{\varepsilon}}=0
\end{cases}
\end{equation}
is such that
\begin{equation*}
0<\hat{\lambda}<g_{\infty}.
\end{equation*}
The existence of $\varepsilon$ is ensured by the continuity of the eigenvalue as function of the boundary condition 
(see~\cite{deFigueiredo1982,Zettl2005}) and by hypothesis $(H4)$. 
From the previous inequality it follows that there exists a constant $\tilde{R}>0$ such that
\begin{equation*}
g(s)>\hat{\lambda} s, \quad \forall \, s\geq \tilde{R}.
\end{equation*}

By contradiction, suppose there is not a constant $R_{i}>0$ with the properties listed above. So, for each integer $n>0$
there exists a solution $u_{n}\geq0$ of \eqref{BVP2} with $\max_{x\in I_{i}}u_{n}(x)=:\hat{R}_{n}>n$.

We claim that there exists an integer $N\geq\tilde{R}$ such that $u_{n}(x)>\tilde{R}$ for every $x\in I_{i}^{\varepsilon}$ and $n\geq N$.
If it is not true, for every integer $n\geq\tilde{R}$ there is an integer $\hat{n}\geq n$ and 
$x_{\hat{n}}\in I_{i}^{\varepsilon}$ such that $u_{\hat{n}}(x_{\hat{n}})=\tilde{R}$.
We note that the solution $u_{\hat{n}}(x)$ is concave on each subinterval of $I_{i}$ where $u_{\hat{n}}(x)\geq \tilde{R}$, 
since $a(x)g(s)\geq 0$ for a.e.~$x\in I_{i}$ and for all $s\geq\tilde{R}$.
Then, without loss of generality, we can assume that there exists a maximum point $\hat{x}_{\hat{n}}\in I_{i}$ of $u_{\hat{n}}$
such that $u_{\hat{n}}(x)>\tilde{R}$ for all $x$ between $x_{\hat{n}}$ and $\hat{x}_{\hat{n}}$
(if necessary, we change the choice of $x_{\hat{n}}$).
From the assumptions, it follows that
\begin{equation}\label{eq-u}
\hat{n}<\hat{R}_{\hat{n}}=u_{\hat{n}}(\hat{x}_{\hat{n}})
=u_{\hat{n}}(x_{\hat{n}})+\int_{x_{\hat{n}}}^{\hat{x}_{\hat{n}}}u_{\hat{n}}'(\xi)~\!d\xi
\leq \tilde{R}+(\tau_{i}-\sigma_{i}) |u_{\hat{n}}'(x_{\hat{n}})|.
\end{equation}
Since $h(x,s)$ is a $L^{1}$-Carath\'{e}odory function, there exists $\gamma_{\tilde{R}}\in L^{1}(\mathopen{[}0,L\mathclose{]},\mathbb{R}^{+})$
such that $|h(x,s)|\leq \gamma_{\tilde{R}}(x)$, for a.e.~$x\in\mathopen{[}0,L\mathclose{]}$ and for all $|s|\leq\tilde{R}$. Then, we fix a constant $C>0$ such that
\begin{equation*}
C>\dfrac{\tilde{R}}{\varepsilon} + \|\gamma_{\tilde{R}}\|_{L^{1}}.
\end{equation*}
Using \eqref{eq-u}, we have that for every $n \geq (\tau_{i}-\sigma_{i})C+\tilde{R}$ there exists $\hat{n}\geq n$ and 
$x_{\hat{n}}\in I_{i}^{\varepsilon}$ such that $u_{\hat{n}}(x_{\hat{n}})=\tilde{R}$ and $|u_{\hat{n}}'(x_{\hat{n}})| > C$.
Let us fix $n \geq (\tau_{i}-\sigma_{i})C+\tilde{R}$, $\hat{n}\geq n$ and $x_{\hat{n}}\in I_{i}^{\varepsilon}$ with the properties just listed.
Suppose that $u_{\hat{n}}'(x_{\hat{n}}) > C$ and consider the interval $\mathopen{[}\sigma_{i},x_{\hat{n}}\mathclose{]}$. 
If $u_{\hat{n}}'(x_{\hat{n}}) < -C$ we proceed similarly dealing with the interval $\mathopen{[}x_{\hat{n}},\tau_{i}\mathclose{]}$.
For every $x\in\mathopen{[}\sigma_{i},x_{\hat{n}}\mathclose{]}$
\begin{equation*}
u_{\hat{n}}'(x)=u_{\hat{n}}'(x_{\hat{n}})+\int_{x_{\hat{n}}}^{x}u_{\hat{n}}''(\xi)~\!d\xi,
\end{equation*}
then
\begin{equation*}
u_{\hat{n}}'(x)>C-\int_{x}^{x_{\hat{n}}} |h(\xi,u_{\hat{n}}(\xi))|~\!d\xi.
\end{equation*}
From this inequality we obtain that $u_{\hat{n}}(x)\leq\tilde{R}$, for all $x\in\mathopen{[}\sigma_{i},x_{\hat{n}}\mathclose{]}$,
and therefore
\begin{equation*}
u_{\hat{n}}'(x)>\dfrac{\tilde{R}}{\varepsilon}, \quad \text{ for all } x\in\mathopen{[}\sigma_{i},x_{\hat{n}}\mathclose{]}.
\end{equation*}
Then, we obtain
\begin{equation*}
\tilde{R} \leq \dfrac{\tilde{R}}{\varepsilon} (x_{\hat{n}}-\sigma_{i}) < \int_{\sigma_{i}}^{x_{\hat{n}}}u_{\hat{n}}'(\xi)~\!d\xi = 
u_{\hat{n}}(x_{\hat{n}})-u_{\hat{n}}(\sigma_{i}) \leq u_{\hat{n}}(x_{\hat{n}}) =\tilde{R},
\end{equation*}
a contradiction. 
Hence the claim is proved. So, we can fix an integer $N\geq\tilde{R}$ 
such that $u_{n}(x)>\tilde{R}$ for every $x\in I_{i}^{\varepsilon}$ and for $n\geq N$.

We denote by $\varphi$ the positive eigenfunction of the eigenvalue problem \eqref{eigen-pb} with $\|\varphi\|_{\infty}=1$.
Then $\varphi(x)>0$, for every $x\in\mathopen]\sigma_{i}+\varepsilon,\tau_{i}-\varepsilon\mathclose[$,
and $\varphi'(\sigma_{i}+\varepsilon)>0>\varphi'(\tau_{i}-\varepsilon)$.
We remark that $u_{n}(\sigma_{i}+\varepsilon)>0$ and $u_{n}(\tau_{i}-\varepsilon)>0$, for every integer $n$,
employing the maximum principle.

Using a Sturm comparison argument, for each $n\geq N$, we obtain
\begin{equation*}
\begin{aligned}
0  & > u_{n}(\tau_{i}-\varepsilon)\varphi'(\tau_{i}-\varepsilon)-u_{n}(\sigma_{i}+\varepsilon)\varphi'(\sigma_{i}+\varepsilon)
\\ & = \Bigl{[}u_{n}(x)\varphi'(x)-u'_{n}(x)\varphi(x)\Bigr{]}_{x=\sigma_{i}+\varepsilon}^{x=\tau_{i}-\varepsilon}
\\ & = \int_{\sigma_{i}+\varepsilon}^{\tau_{i}-\varepsilon}\dfrac{d}{dx}\Bigl{[}u_{n}(x)\varphi'(x)-u'_{n}(x)\varphi(x)\Bigr{]} ~\!dx
\\ & = \int_{I_{i}^{\varepsilon}}\Bigl{[}u_{n}(x)\varphi''(x)-u''_{n}(x)\varphi(x)\Bigr{]} ~\!dx
\\ & = \int_{I_{i}^{\varepsilon}}\Bigl{[}-u_{n}(x)\hat{\lambda}a^{+}(x)\varphi(x)+h(x,u_{n}(x))\varphi(x)\Bigr{]} ~\!dx
\\ & =  \int_{I_{i}^{\varepsilon}}\Bigl{[}h(x,u_{n}(x))-\hat{\lambda}a^{+}(x)u_{n}(x)\Bigr{]}\varphi(x) ~\!dx
\\ & \geq \int_{I_{i}^{\varepsilon}}\Bigl{[}a(x)g(u_{n}(x))-\hat{\lambda}a^{+}(x)u_{n}(x)\Bigr{]}\varphi(x) ~\!dx
\\ & = \int_{I_{i}^{\varepsilon}}\Bigl{[}g(u_{n}(x))-\hat{\lambda}u_{n}(x)\Bigr{]}a^{+}(x)\varphi(x) ~\!dx
\\ & \geq 0,
\end{aligned}
\end{equation*}
a contradiction.

\medskip

\noindent
\textit{Step 2. Computation of the degree.}
We stress that the constant $R_{i}$, $i\in\{1,\ldots,m\}$, does not depend on the function $h(x,s)$.
Define
\begin{equation*}
R^{*}:=\max_{i=1,\ldots,m}R_{i}+\tilde{R}>0
\end{equation*}
and fix a radius $R\geq R^{*}$.

We denote by $\mathbbm{1}_{A}$ the characteristic function of the set $A:= \bigcup_{i=1}^{m} I_{i}$.
Let us define $v(x):=\int_{I}G(x,s)\mathbbm{1}_{A}(s) ~\!ds$.
Using a classical result (see~\cite[Theorem~3.1]{deFigueiredo1982} or \cite[Lemma~1.1]{Nussbaum1973}), if we show that
\begin{equation}\label{eq-deF}
u \neq \Phi(u) + \alpha v,\quad \text{for all } u\in \partial B(0,R) \text{ and } \alpha \geq 0,
\end{equation}
the theorem is proved.

Let $\alpha \geq 0$. 
The maximum principle ensures that any nontrivial solution $u\in\mathcal{C}(\mathopen[0,L\mathclose])$ of 
$u =\Phi(u) + \alpha v$ is a non-negative solution of
$u'' + a(x)\tilde{g}(u) + \alpha \mathbbm{1}_{A}(x) = 0$ with $u(0) = u(L) = 0$. 
Hence, $u$ is a non-negative solution of \eqref{BVP2} with
\begin{equation*}
h(x,s) = a(x)g(s) + \alpha \mathbbm{1}_{A}(x).
\end{equation*}
By definition, we have that $h(x,s) \geq a(x)g(s)$, for a.e.~$x\in A$ and for all $s\geq 0$, and $h(x,s) = a(x)g(s)$,
for a.e.~$x\in \mathopen[0,L\mathclose]\setminus A$ and for all $s\geq 0$.
By the convexity of the solution $u$ on the intervals of $\mathopen[0,L\mathclose]\setminus A$ where $u(x)\geq\tilde{R}$,
we obtain that
\begin{equation*}
\|u\|_{\infty}=\max_{x\in \mathopen[0,L\mathclose]}u(x)\leq\max\Bigl{\{}\max_{x\in A}u(x), \tilde{R}\Bigr{\}}.
\end{equation*}
From \textit{Step 1} and the definition of $\tilde{R}$ we deduce that $\|u\|_{\infty} < R^{*} \leq R$. 
Then \eqref{eq-deF} is proved and the theorem follows.
\end{proof}

\section{The main result}\label{section3}

In this section we apply the two technical lemmas just proved to obtain the existence of a positive solution
to the two-point boundary value problem \eqref{two-pointBVPag}. 
More in detail, we use the additivity of the topological degree to provide the existence of a nontrivial fixed point 
of the operator $\Phi$ defined in \eqref{operator}.

A first immediate consequence of Lemma~\ref{lemmar0} and Lemma~\ref{lemmaR} is our main theorem. 

\begin{theorem}\label{MainTheorem}
Let $a\colon\mathopen[0,L\mathclose]\to{\mathbb{R}}$ be a $L^{1}$-function 
and $g\colon{\mathbb{R}}^{+}\to{\mathbb{R}}$ be a continuous function satisfying $(H1)$, $(H2)$, $(H3)$ and $(H4)$.
Then there exists at least a positive solution of the two-point boundary value problem \eqref{two-pointBVPag}.
\end{theorem}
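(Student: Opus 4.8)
The plan is to combine the two degree computations already established via the additivity property of the Leray--Schauder degree over an annular region. First I would invoke Lemma~\ref{lemmar0} to fix $r_{0}>0$ with $deg(Id-\Phi,B(0,r),0)=1$ for every $0<r\leq r_{0}$, and Lemma~\ref{lemmaR} to fix $R^{*}>0$ with $deg(Id-\Phi,B(0,R),0)=0$ for every $R\geq R^{*}$. I then fix a radius $r\in\mathopen]0,r_{0}\mathclose]$ and, since Lemma~\ref{lemmaR} allows $R$ to be taken arbitrarily large, a radius $R\geq R^{*}$ with $R>r$.

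Next I would consider the open annular domain $\Omega:=B(0,R)\setminus\overline{B(0,r)}$. The operator $\Phi$ is fixed-point-free on the inner sphere $\partial B(0,r)$ --- this is exactly \textit{Step~1} of Lemma~\ref{lemmar0} with $\vartheta=1$ --- and on the outer sphere $\partial B(0,R)$ --- this is \eqref{eq-deF} with $\alpha=0$ --- so the three degrees appearing below are all well defined. The additivity property of the degree then yields
\begin{equation*}
deg(Id-\Phi,B(0,R),0)=deg(Id-\Phi,B(0,r),0)+deg(Id-\Phi,\Omega,0),
\end{equation*}
whence, substituting the two known values, $deg(Id-\Phi,\Omega,0)=0-1=-1\neq0$.

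By the solution property of the topological degree, the nonvanishing of $deg(Id-\Phi,\Omega,0)$ produces a fixed point $u\in\Omega$ of $\Phi$, that is, a function with $u=\Phi(u)$ and $r<\|u\|_{\infty}<R$; in particular $u\not\equiv0$. As recalled in Section~\ref{section2}, every fixed point of $\Phi$ solves the boundary value problem \eqref{BVP}, and the maximum principle forces such a nontrivial solution to be non-negative and strictly positive on $\mathopen]0,L\mathclose[$. Hence $u$ is a positive solution of \eqref{two-pointBVPag}, which is the assertion.

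I do not anticipate a substantive obstacle, because all the analysis --- the a priori bounds and the homotopy arguments --- is already packaged into the two lemmas, so that the theorem is indeed an immediate corollary, as announced. The only point deserving a moment's care is the verification that $\Phi$ has no fixed points on either bounding sphere, so that the annular degree is defined and the additivity formula legitimately applies; both facts are read off directly from \textit{Step~1} of the respective lemma.
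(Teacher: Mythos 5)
Your proposal is correct and follows essentially the same route as the paper: the author likewise combines Lemma~\ref{lemmar0} and Lemma~\ref{lemmaR} via the additivity (excision) property on the annulus $B(0,R^{*})\setminus B[0,r_{0}]$ to get degree $0-1=-1$, and then invokes the maximum principle to pass from a nontrivial fixed point to a positive solution. Your extra remarks on the absence of fixed points on the two bounding spheres are a sensible (and correct) elaboration of what the paper leaves implicit.
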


\begin{proof}
Let $r_{0}$ be as in Lemma~\ref{lemmar0} and $R^{*}$ be as in Lemma~\ref{lemmaR}. We observe that $0<r_{0}<R^{*}<+\infty$. 
From the additivity property and the two preliminary lemmas it follows that
\begin{equation*}
\begin{aligned}
& deg(Id-\Phi,B(0,R^{*})\setminus B[0,r_{0}],0)=
\\ & = deg(Id-\Phi,B(0,R^{*}),0)-deg(Id-\Phi,B(0,r_{0}),0)=
\\ & =0-1=-1\neq0.
\end{aligned}
\end{equation*}
Then there exists a nontrivial fixed point of $\Phi$ and hence a corresponding positive solution of \eqref{two-pointBVPag}, as already remarked.
\end{proof}

{}From Theorem~\ref{MainTheorem} we easily achieve the following two results.

\begin{corollary}\label{cor1}
Let $a\colon\mathopen[0,L\mathclose]\to{\mathbb{R}}$ be a $L^{1}$-function and $g\colon{\mathbb{R}}^{+}\to{\mathbb{R}}$ be a continuous function satisfying $(H1)$ and $(H2)$. Assume that
\begin{equation*}
g'(0)=\lim_{s\to0^{+}}\dfrac{g(s)}{s}=0,
\end{equation*}
and, for each $i\in\{1,\ldots,m\}$,  suppose that $a(x)\not\equiv 0$ on $I_{i}$ and 
\begin{equation*}
g'(\infty):=\lim_{s\to+\infty}\dfrac{g(s)}{s}=+\infty.
\end{equation*}
Then there exists at least a positive solution of the two-point BVP \eqref{two-pointBVPag}.
\end{corollary}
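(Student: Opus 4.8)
The plan is to derive Corollary~\ref{cor1} as a direct special case of Theorem~\ref{MainTheorem} by verifying that its hypotheses force all four structural conditions $(H1)$--$(H4)$ to hold. Assumptions $(H1)$ and $(H2)$ are imposed explicitly, so the only work is to check $(H3)$ and $(H4)$ under the two limit conditions
\begin{equation*}
\lim_{s\to0^{+}}\dfrac{g(s)}{s}=0, \qquad \lim_{s\to+\infty}\dfrac{g(s)}{s}=+\infty.
\end{equation*}

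First I would handle the behavior at zero. Since the limit $\lim_{s\to0^{+}}g(s)/s$ exists and equals $0$, we have $g_{0}^{inf}=g_{0}^{sup}=0$, so in particular both are finite, matching the standing requirements on these quantities. To place ourselves in one of the three bullet cases of $(H3)$, I would argue that because $g'(0)=0$ while $g\not\equiv 0$ and $g_{\infty}=+\infty>0$, the nonlinearity is nontrivial; I would select whichever of the three sign regimes applies to $g$ in a right neighborhood of zero (if $g$ changes sign infinitely often near $0$ we are in the third case, otherwise $g$ has a definite sign on some $\mathopen[0,\delta\mathclose]$ and we are in the first or second). In every case the relevant first eigenvalue $\lambda_{0}^{+}$, $\lambda_{0}^{-}$, or $\lambda_{0}$ is a strictly positive number, and the inequality to be verified (respectively $g_{0}^{sup}<\lambda_{0}^{+}$, or $g_{0}^{inf}>-\lambda_{0}^{-}$, or $-\lambda_{0}<g_{0}^{inf}\leq g_{0}^{sup}<\lambda_{0}$) holds trivially because $g_{0}^{inf}=g_{0}^{sup}=0$ lies strictly inside the admissible interval. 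The side conditions $a^{+}\not\equiv0$, $a^{-}\not\equiv0$, or $a\not\equiv0$ follow from the hypothesis that $a\not\equiv0$ on each $I_{i}$ together with $(H1)$.

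Next I would verify $(H4)$. The hypothesis already supplies $a(x)\not\equiv0$ on each $I_{i}$, so only the eigenvalue inequality $g_{\infty}>\lambda_{1}^{i}$ remains. From $\lim_{s\to+\infty}g(s)/s=+\infty$ we get $g_{\infty}=\liminf_{s\to+\infty}g(s)/s=+\infty$, which exceeds every finite eigenvalue $\lambda_{1}^{i}$; since $a^{+}\not\equiv0$ on $I_{i}$ the first eigenvalue $\lambda_{1}^{i}$ is a well-defined positive real, hence finite, so the inequality is immediate. With $(H1)$, $(H2)$, $(H3)$, and $(H4)$ all confirmed, Theorem~\ref{MainTheorem} applies and yields a positive solution of \eqref{two-pointBVPag}, completing the proof.

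I do not expect a genuine obstacle here, since the corollary is advertised as an immediate consequence; the only point requiring mild care is the trichotomy in $(H3)$, where one must confirm that the conclusion holds no matter which sign regime $g$ exhibits near zero, and in particular that $g'(0)=0$ is strong enough to land strictly inside each of the three spectral gaps. This is purely a matter of checking that $0$ lies below every first eigenvalue, which is true because all the eigenvalues in question are strictly positive.
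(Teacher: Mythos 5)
Your route is exactly the paper's: the paper gives no separate argument for Corollary~\ref{cor1} beyond the remark that it follows ``easily'' from Theorem~\ref{MainTheorem}, and your proposal fills in precisely the intended verification of $(H3)$ and $(H4)$. The check of $(H4)$ and of the first and third bullets of $(H3)$ is correct: $(H1)$ together with $a\not\equiv 0$ on each $I_{i}$ gives $a^{+}\not\equiv 0$ (hence also $a\not\equiv 0$) on $\mathopen[0,L\mathclose]$, the relevant first eigenvalues are finite and strictly positive, and $g_{0}^{inf}=g_{0}^{sup}=0$, $g_{\infty}=+\infty$ land strictly inside the required spectral ranges.

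There is, however, one step that does not hold as you state it: you claim that the side condition $a^{-}\not\equiv 0$, needed for the second bullet of $(H3)$ when $g(s)\leq 0$ on some $\mathopen[0,\delta\mathclose]$, follows from $(H1)$ and $a\not\equiv 0$ on each $I_{i}$. It does not. Those hypotheses control only the positive part of $a$; for instance $a(x)\geq 0$ a.e.\ (allowed by $(H1)$ with $m=1$, $I_{1}=\mathopen[0,L\mathclose]$) forces $a^{-}\equiv 0$, and nothing in the corollary's hypotheses prevents $g$ from being non-positive near $0$ (e.g.\ $g(s)=-s^{2}$ near the origin with a superlinear positive tail). In that configuration $(H3)$ as literally stated fails, so Theorem~\ref{MainTheorem} cannot be invoked verbatim --- a defect the paper itself inherits, since it explicitly advertises the corollary for $a\geq 0$ with ``no sign condition on $g$''. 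The conclusion survives because the only place the second bullet of $(H3)$ is used is Lemma~\ref{lemmar0}, and when $a^{-}\equiv 0$ and $g\leq 0$ on $\mathopen[0,\delta\mathclose]$ every non-negative solution of \eqref{BVP1} with $\|u\|_{\infty}\leq r_{0}<\delta$ satisfies $u''=-\vartheta a^{+}(x)g(u)\geq 0$, hence is convex, non-negative and vanishes at both endpoints, so $u\equiv 0$ and the degree computation on small balls goes through unchanged. You should either add this direct argument for the case $a^{-}\equiv 0$, or restrict the second bullet's verification to the case $a^{-}\not\equiv 0$ (where $g_{0}^{inf}=0>-\lambda_{0}^{-}$ does apply) and treat the remaining case separately.
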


\begin{corollary}\label{cor2}
Let $a\colon\mathopen[0,L\mathclose]\to{\mathbb{R}}$ be a $L^{1}$-function satisfying $(H1)$ and such that 
$a(x)\not\equiv 0$ on $I_{i}$, for each $i\in\{1,\ldots,m\}$.
Let $g\colon{\mathbb{R}}^{+}\to{\mathbb{R}}$ be a continuous function satisfying $(H2)$ and such that $g'(0)=0$ and $g'(\infty)=\Lambda>0$.
Then there exists $\lambda^{*}>0$ such that, for each $\lambda>\lambda^{*}$, the two-point BVP
\begin{equation*}
\begin{cases}
\, u''+\lambda a(x)g(u)=0 \\
\, u(0)=u(L)=0
\end{cases}
\end{equation*}
has at least a positive solution.
\end{corollary}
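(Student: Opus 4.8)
The plan is to deduce the corollary from Theorem~\ref{MainTheorem} applied to the rescaled weight $\lambda a(x)$ in place of $a(x)$, taking advantage of the elementary fact that the first eigenvalues entering $(H3)$ and $(H4)$ scale like $1/\lambda$ when the weight is multiplied by $\lambda>0$. First I note that, since $\lambda>0$, the function $\lambda a$ has exactly the same sign as $a$, so $(H1)$ holds with the same intervals $I_{1},\ldots,I_{m}$ and $(\lambda a)^{+}=\lambda a^{+}$, $(\lambda a)^{-}=\lambda a^{-}$, $|\lambda a|=\lambda|a|$. Condition $(H2)$ is unaffected because the nonlinearity remains $g$. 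Moreover the hypothesis $g'(0)=0$ says precisely that $g_{0}^{inf}=g_{0}^{sup}=0$, while $g'(\infty)=\Lambda$ yields $g_{\infty}=\Lambda>0$, so the standing assumption $g_{\infty}>0$ is satisfied for every $\lambda$.

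Second I would verify $(H3)$ for $\lambda a$. Writing $\mu$ for the eigenvalue parameter, the problem $\varphi''+\mu(\lambda a^{+})\varphi=0$, $\varphi(0)=\varphi(L)=0$, has first eigenvalue $\lambda_{0}^{+}/\lambda>0$, and likewise the problems built on $\lambda a^{-}$ and $\lambda|a|$ have first eigenvalues $\lambda_{0}^{-}/\lambda>0$ and $\lambda_{0}/\lambda>0$. Whichever of the three alternatives of $(H3)$ describes the sign of $g$ near $0$, the condition to be checked compares $g_{0}^{inf}=g_{0}^{sup}=0$ with one of these strictly positive numbers, so it holds: $0<\lambda_{0}^{+}/\lambda$ in the first case, $0>-\lambda_{0}^{-}/\lambda$ in the second, and $-\lambda_{0}/\lambda<0\le0<\lambda_{0}/\lambda$ in the third. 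The accompanying non-degeneracy requirements on the weight are inherited from $(H1)$ together with $a\not\equiv0$ on each $I_{i}$, which forces $a^{+}\not\equiv0$ and $a\not\equiv0$ on $\mathopen[0,L\mathclose]$. Hence $(H3)$ is valid for $\lambda a$ for every $\lambda>0$.

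The only hypothesis that actually constrains $\lambda$ is $(H4)$, and this is the crux of the argument. For the weight $\lambda a$ the eigenvalue problem on $I_{i}$ becomes $\varphi''+\mu(\lambda a^{+})\varphi=0$, $\varphi|_{\partial I_{i}}=0$, whose first eigenvalue is $\lambda_{1}^{i}/\lambda$. Thus $(H4)$ for $\lambda a$ reads $\Lambda=g_{\infty}>\lambda_{1}^{i}/\lambda$ for every $i\in\{1,\ldots,m\}$, that is $\lambda>\lambda_{1}^{i}/\Lambda$. Setting
\begin{equation*}
\lambda^{*}:=\max_{i=1,\ldots,m}\frac{\lambda_{1}^{i}}{\Lambda}>0,
\end{equation*}
all these inequalities hold simultaneously as soon as $\lambda>\lambda^{*}$.

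Finally, for every $\lambda>\lambda^{*}$ the pair $(\lambda a,g)$ satisfies $(H1)$--$(H4)$, so Theorem~\ref{MainTheorem} provides a positive solution of $u''+\lambda a(x)g(u)=0$ with $u(0)=u(L)=0$, which is exactly the claim. I do not anticipate a genuine obstacle: the argument is essentially bookkeeping, whose single quantitative ingredient is the linear scaling of the first Dirichlet eigenvalues with the weight, turning the qualitative requirement $(H4)$ into the explicit threshold $\lambda^{*}=\max_{i}\lambda_{1}^{i}/\Lambda$. The one place deserving a moment's attention is the case split in $(H3)$, but since $g_{0}^{inf}=g_{0}^{sup}=0$ each alternative reduces to comparing $0$ with a positive rescaled eigenvalue, so it is automatic.
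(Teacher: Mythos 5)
Your proof is correct and is exactly the argument the paper intends: the paper gives no explicit proof of Corollary~\ref{cor2} beyond the remark that it is ``easily achieved'' from Theorem~\ref{MainTheorem}, and the mechanism is precisely the $1/\lambda$ scaling of the weighted Dirichlet eigenvalues that you spell out, yielding the threshold $\lambda^{*}=\max_{i}\lambda_{1}^{i}/\Lambda$. The one place where your write-up (and, in fairness, the paper's own assertion) is slightly loose is the second alternative of $(H3)$: if $g(s)\leq 0$ on a right neighborhood of $0$, that alternative also requires $a^{-}\not\equiv 0$, which the hypotheses of the corollary do not guarantee (they allow $a\geq 0$ a.e.), so your claim that $(H3)$ holds for $\lambda a$ in all three cases is not literally verified there; this is easily patched, since in that degenerate situation any non-negative solution with $\|u\|_{\infty}\leq r_{0}$ satisfies $u''\geq 0$, is convex with zero boundary values, and is therefore identically zero, so the conclusion of Lemma~\ref{lemmar0} (and hence the degree argument) holds without invoking the eigenvalue comparison. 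Apart from this shared edge case, the bookkeeping is accurate.
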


Although hypothesis $(H1)$ is more interesting when the set $\mathopen[0,L\mathclose]\setminus \bigcup_{i=1}^{m} I_{i}$ is not negligible,
we can consider a weight $a(x)\geq0$ for a.e.~$x\in\mathopen[0,L\mathclose]$, as previously observed. In that situation Corollary \ref{cor1}
ensures the existence of a positive solution in the superlinear case (i.e.~$g'(0)=0$ and $g'(\infty)=+\infty$), provided that $a\not\equiv 0$. 
No sign condition on the function $g(s)$ is required. Thus we have extended \cite[Theorem~1]{ErbeWang1994}, 
attained as an application of Krasnosel'ski\u{\i} fixed point Theorem.

\begin{remark}
Our approach is based on the definition of a fixed point problem which is equivalent to the boundary value problem considered. 
It is clear that we could deal with different conditions at the boundary of $\mathopen{[}0,L\mathclose{]}$ like 
$u'(0)=u(L)=0$ or $u(0)=u'(L)=0$, since a suitable maximum principle and a Green function (cf.~\cite{ErbeWang1994})
are available to define an equivalent fixed point problem and to adapt the scheme shown in this paper.
\end{remark}

\section{Radially symmetric solutions}\label{section4}

We denote by $\|\cdot\|$ the Euclidean norm in ${\mathbb{R}}^{N}$ (for $N \geq 2$). Let
\begin{equation*}
\Omega:= B(0,R_{2})\setminus B[0,R_{1}] = \{x\in {\mathbb{R}}^{N} \colon R_{1} < \|x\| < R_{2}\}
\end{equation*}
be an open annular domain, with $0 < R_{1} < R_{2}$.
Let $a\colon \mathopen{[}R_{1},R_{2}\mathclose{]}\to {\mathbb{R}}$ be a continuous function.
In this section we consider the Dirichlet boundary value problem
\begin{equation}\label{eq-dir}
\begin{cases}
\, -\Delta \,u = a(\|x\|)\,g(u) & \text{ in } \Omega \\
\, u = 0 & \text{ on } \partial\Omega
\end{cases}
\end{equation}
and we are interested in the existence of positive solutions of \eqref{eq-dir},
namely classical solutions such that $u(x) > 0$ for all $x\in \Omega$.

Since we look for radially symmetric solutions of \eqref{eq-dir}, our study can be reduced to the search of positive solutions
of the two-point boundary value problem
\begin{equation}\label{eq-rad}
w''(r) + \dfrac{N-1}{r}w'(r) + a(r) g(w(r)) = 0, \quad w(R_{1}) = w(R_{2}) = 0.
\end{equation}
Indeed, if $w(r)$ is a solution of \eqref{eq-rad}, then $u(x):= w(\|x\|)$ is a solution of \eqref{eq-dir}.
Using the standard change of variable
\begin{equation*}
t = h(r):= \int_{R_{1}}^r \xi^{1-N} ~\!d\xi
\end{equation*}
and defining
\begin{equation*}
L:= \int_{R_{1}}^{R_{2}} \xi^{1-N} ~\!d\xi, \quad r(t):= h^{-1}(t) \quad \text{and} \quad v(t)=w(r(t)),
\end{equation*}
we transform \eqref{eq-rad} into the equivalent problem
\begin{equation}\label{eq-rad1}
v''(t) +  r(t)^{2(N-1)}a(r(t)) g(v(t)) = 0, \quad v(0) = v(L) = 0.
\end{equation}
Consequently, the two-point boundary value problem \eqref{eq-rad1} is of the same form of \eqref{two-pointBVPag} 
considering $r(t)^{2(N-1)} a(r(t))$ as weight function.

Clearly the following result holds.

\begin{theorem}\label{th-radial}
Let $a\colon\mathopen{[}R_{1},R_{2}\mathclose{]}\to{\mathbb{R}}$ and $g\colon{\mathbb{R}}^{+}\to{\mathbb{R}}$ 
be continuous functions satisfying $(H1)$, $(H2)$, $(H3)$ and $(H4)$.
Then problem \eqref{eq-dir} has at least a positive solution.
\end{theorem}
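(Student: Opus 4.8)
The plan is to reduce everything to Theorem~\ref{MainTheorem} through the transformation already exhibited above. Recall that a radially symmetric positive solution of \eqref{eq-dir} is exactly a function $u(x)=w(\|x\|)$ where $w$ solves the singular two-point problem \eqref{eq-rad}, and that the Liouville-type change of variable $t=h(r)=\int_{R_{1}}^{r}\xi^{1-N}\,d\xi$ carries \eqref{eq-rad} into \eqref{eq-rad1}, a problem of the form \eqref{two-pointBVPag} on $\mathopen[0,L\mathclose]$ with weight $\tilde{a}(t):=r(t)^{2(N-1)}a(r(t))$. Thus the statement follows once we check that $\tilde{a}$ and $g$ fulfil $(H1)$--$(H4)$, apply Theorem~\ref{MainTheorem} to get a positive solution $v$ of \eqref{eq-rad1}, and read the result back as $w(r)=v(h(r))$ and $u(x)=w(\|x\|)$.

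First I would verify that the hypotheses transfer to the reduced weight $\tilde{a}$. Since $h'(r)=r^{1-N}>0$, the map $h$ is a strictly increasing $C^{1}$-diffeomorphism of $\mathopen[R_{1},R_{2}\mathclose]$ onto $\mathopen[0,L\mathclose]$, and the Jacobian factor $r(t)^{2(N-1)}$ is continuous and strictly positive. Consequently $\tilde{a}(t)$ has the same sign as $a(r(t))$ at every point, so $(H1)$ holds for $\tilde{a}$ with the transported intervals $h(I_{1}),\dots,h(I_{m})$, one has $\tilde{a}^{\pm}(t)=r(t)^{2(N-1)}a^{\pm}(r(t))$ and $|\tilde{a}(t)|=r(t)^{2(N-1)}|a(r(t))|$, and $\tilde{a}\not\equiv 0$ on $h(I_{i})$ precisely when $a\not\equiv 0$ on $I_{i}$. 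Hypothesis $(H2)$ involves only $g$ and is untouched.

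The substantive point is $(H3)$--$(H4)$. Here the first eigenvalues to be controlled are those of $\psi''+\lambda\,\tilde{a}^{+}(t)\,\psi=0$ (and the analogues with $\tilde{a}^{-}$ and $|\tilde{a}|$) on $\mathopen[0,L\mathclose]$ and on each $h(I_{i})$. Under $t=h(r)$ each such problem is the exact image of the singular radial eigenvalue problem $\varphi''+\tfrac{N-1}{r}\varphi'+\lambda\,a^{+}(r)\,\varphi=0$ on the corresponding $r$-interval, with $\psi(t)=\varphi(r(t))$ and the \emph{same} $\lambda$: indeed, using $dr/dt=r^{N-1}$ one computes $\tfrac{d^{2}}{dt^{2}}\psi=r^{2(N-1)}\bigl(\varphi''+\tfrac{N-1}{r}\varphi'\bigr)=-\lambda\,r^{2(N-1)}a^{+}(r)\,\psi=-\lambda\,\tilde{a}^{+}(t)\,\psi$. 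Hence the first eigenvalues attached to $\tilde{a}$ coincide with the radial ones, so the smallness of $g_{0}^{sup},g_{0}^{inf}$ and the largeness of $g_{\infty}$ imposed in $(H3)$--$(H4)$ are read off directly from the data of \eqref{eq-dir}. With this, Theorem~\ref{MainTheorem} applies to \eqref{eq-rad1} and delivers $v$ with $v>0$ on $\mathopen]0,L\mathclose[$; then $w(r)=v(h(r))$ is positive on $\mathopen]R_{1},R_{2}\mathclose[$ and solves \eqref{eq-rad}, and $u(x)=w(\|x\|)$ is a radial classical solution of \eqref{eq-dir}, positive in $\Omega$ by elliptic regularity.

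The main obstacle is precisely this eigenvalue bookkeeping: one must make sure that absorbing the first-order term $\tfrac{N-1}{r}w'$ together with the positive Jacobian weight into the change of variable genuinely preserves the relevant first eigenvalues, so that the conditions $(H3)$--$(H4)$ translate into the exact hypotheses required by Theorem~\ref{MainTheorem} for the weight $\tilde{a}$ of the reduced problem. This is the standard behaviour of the Liouville transformation for Sturm--Liouville problems, but it is where the word ``clearly'' carries real content; the sign analysis, the pull-back through the diffeomorphism, and the passage $u(x)=w(\|x\|)$ are all routine.
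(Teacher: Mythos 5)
Your proposal is correct and follows exactly the route the paper intends: the paper itself gives no proof beyond the change of variable $t=h(r)$ and the remark that \eqref{eq-rad1} has the form \eqref{two-pointBVPag} with weight $r(t)^{2(N-1)}a(r(t))$, declaring the theorem ``clear''. You supply the details the paper omits (sign bookkeeping for $(H1)$ and the identification of the first eigenvalues under the Liouville transformation), and your computation $\psi''=r^{2(N-1)}\bigl(\varphi''+\tfrac{N-1}{r}\varphi'\bigr)$ is the right justification for why $(H3)$--$(H4)$ transfer.
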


\section*{Acknowledgments} 

This work benefited from long enlightening discussions, helpful suggestions and encouragement of Fabio Zanolin. 
This research was supported by \textit{SISSA - International School for Advanced Studies} and \textit{Universit\`{a} degli Studi di Udine}.

\bibliographystyle{elsart-num-sort}
\bibliography{Feltrin_biblio}

\bigskip
\begin{flushleft}

{\small{\it Preprint}}

{\small{\it September 2014}}

\end{flushleft}

\end{document}